\definecolor{webgreen}{rgb}{0,.5,0}
\definecolor{webbrown}{rgb}{.6,0,0}
\begin{document}

\theoremstyle{plain}
\newtheorem{theorem}{Theorem}
\newtheorem{corollary}[theorem]{Corollary}
\newtheorem{lemma}[theorem]{Lemma}
\newtheorem{proposition}[theorem]{Proposition}

\theoremstyle{definition}
\newtheorem{definition}[theorem]{Definition}
\newtheorem{example}[theorem]{Example}
\newtheorem{conjecture}[theorem]{Conjecture}

\theoremstyle{remark}
\newtheorem{remark}[theorem]{Remark}

\begin{center}
\vskip 1cm{\LARGE\bf 
  Mersenne Primes in Real Quadratic Fields}
\vskip 1cm
\large Sushma Palimar and Shankar B R\\
Department of Mathematical and Computational Sciences \\
National Institute of Technology Karnataka, Surathkal\\
Mangalore, INDIA.\\
\href{mailto:sushmapalimar@gmail.com}{\tt sushmapalimar@gmail.com}\\
\href{mailto:shankarbr@gmail.com}{\tt shankarbr@gmail.com}\\
\end{center}

\vskip .2 in

\begin{abstract}
The concept of Mersenne primes is studied in  real quadratic fields of  class number 1. 
Computational results are given. The field $Q(\sqrt{2})$ is studied in detail with a focus on 
representing Mersenne primes in the form $x^{2}+7y^{2}$. It is also  proved  that
 $x$ is divisible by $8$ and $y\equiv \pm3\pmod{8}$ generalizing the result of F Lemmermeyer, first proved in \cite{LS}
using Artin's Reciprocity law.
\end{abstract}

\section{Introduction}

It is well known that $a^{d}-1$ divides $a^{n}-1$ for each divisor $d$ of $n,$ and if $n=p$, a prime,
 then \begin{equation}\label{1.1}
                         a^{p}-1=(a-1)(1+a+a^{2}+...+a^{p-1})
                        \end{equation}
and if  $a^{p}-1$ is a prime, then $a=2$.\\ 
Number theorists of all persuasions have been fascinated by prime numbers of the form $2^{p}-1$ ever since 
\textit{Euclid}  used them for the construction of perfect numbers. In modern times, they are named after 
\textit{Marin Mersenne} (1588-1648).
A well known result due to Euclid is that, if $2^{p}-1$ is a prime, then $2^{p-1}(2^{p}-1)$ is perfect.
Much later $Euler$ proved the converse,  every even-perfect number has this form. \\Mersenne primes have been studied 
by amateurs as well as specialists. Mersenne primes are used in cryptography too in generating pseudorandom numbers.
 By far, the most widely used technique  for pseudorandom number generation is an algorithm first proposed by Lehmer, known 
as the linear congruential method.  It is generated by the recursion $X_{n+1}\equiv aX_{n}\pmod{M_{31}}$, where $M_{31}$
is the Mersenne prime $2^{31}-1.$ Of the more than two billion choices for $a$ only handful of multipliers are useful. 
One such value is $a=7^{5}=16807,$ which was originally designed for use in the IBM 360 family of computers\cite{WS}. \\
On March 3, 1998, the birth centenary of \textit{Emil Artin } was celebrated at the Universiteit van Amsterdam. 
The paper \cite{LS}  is based on two lectures given on the occasion. We quote 
from \cite{LS}: 
``Artin's reciprocity law is one of the cornerstones of \textit{class field theory.} 
To illustrate its usefulness in elementary number theory, we shall apply it to prove a recently observed property of
Mersenne
primes.'' The property of Mersenne primes referred to is the following:
if $M_{p}=2^{p}-1$ is prime and $p\equiv 1\pmod{3}$, then $M_{p}=x^{2}+7y^{2}$ for some integers $x,y$ and one always has 
$x\equiv 0\pmod{8}$. Also, $y\equiv \pm 3 \pmod{8}.$ This was first observed by \textit{Franz Lemmermeyer}. \\ 
Many have attempted to generalise the notion of Mersenne primes and even-perfect numbers to complex quadratic fields 
with class number 1. One reason is that they have only finitely many units. Indeed, with the exception of $Q(\sqrt{-1})$
and $Q(\sqrt{-3})$, the other seven complex quadratic fields with class number 1 have only two units: $\pm1$.
\textit{Robert Spira}  defined Mersenne primes over $Q(\sqrt{-1})$
 to give a useful definition of even-perfect 
numbers over $Z[i]$, the ring of Gaussian integers\cite{RS} . His work was continued later 
by \textit{Wayne L Mc Daniel} to give an
analogue of \textit{Euclid-Euler} Theorem over $Q(\sqrt{-1})$ and $Q(\sqrt{-3})$ \cite{WM,WL}.
In both the papers the concept of Mersenne primes is used to give a valid definition of even-perfect numbers.\\
Recently \textit{Pedro Berrizbeitia} and \textit{Boris Iskra}  studied Mersenne primes over Gaussian integers and 
Eisenstein integers \cite{BI}. The  primality of Gaussian Mersenne numbers and Eisenstein Mersenne numbers are tested using 
 biquadratic reciprocity  and
cubic reciprocity laws respectively.    
\paragraph{}In this paper the concept of Mersenne primes has been extended to  real quadratic fields 
$K=Q(\sqrt{d})$  with class number 1, so that unique factorization holds and irreducibles are always prime.
We denote the ring of integers of $K$ by  ${\cal O_{K}}$, 
\begin{displaymath}
{\cal O_{K}} =\left\{ \begin{array}{ll} Z[\sqrt{d}]      & \textrm{ if \(d \equiv\ 2,3\pmod{4}\)}\\
 Z[\frac{1+\sqrt{d}}{2}]      & \textrm{if  \(d \equiv\ 1\pmod{4}\)}
 \end{array} \right.
\end{displaymath}
Since $K$ is a Unique Factorization Domain, irreducibles  are primes  in these domains.
 Hence for any $\eta$ $\in$ $K$ 
the two factorings of $\eta$ say
\begin{center}
 $\eta  = \pi_1^{k_1}\pi_2^{k_2}...\pi_r^{k_r} $ and 
$\eta = \epsilon_1 \pi_1^{k_1}\epsilon_2 \pi_2^{k_2}...\epsilon_r \pi_r^{k_r}$
\end{center}
are considered to be one and the same, where $\epsilon_i$ are units and $\pi_i$ are irreducibles. 
\paragraph*{}
We define $M_{p,\alpha}=\frac{\alpha^{p}-1}{\alpha-1}\,$ such that
$\alpha \in \mathcal{O_{K}}$ is irreducible and $(\alpha-1)=u$ is a unit other than $\pm 1$.
Then  $M_{p,\alpha}$ may be called as  an analog of Mersenne prime if the norm of 
$M_{p,\alpha}$ namely $N(M_{p,\alpha})=N(\frac{\alpha^{p}-1}{a-1})$
is a rational prime. Condition for the irreducibility of $1+u=\alpha \in  {\cal O_{K}}$  such 
that $\alpha-1$ is a unit (other than $\pm 1$) is  derived in the next section.
For this we study the case $N(\alpha-1)=N(u)=\pm 1$ separately. We also give a list 
of such quadratic fields and a few Mersenne primes in those fields.\\ Computational results show that, among real quadratic 
 fields, Mersenne primes in $Q(\sqrt{2})$ have a definite structure. The special property of the usual 
Mersenne primes observed by  \textit{Franz Lemmermeyer} and proved in \cite{LS} seems to admit a generalisation to 
Mersenne primes over $Q(\sqrt{2})$. This property appears to be special only to   $Q(\sqrt{2})$.
Some interesting properties of Mersenne primes 
 and recent  primality tests to check the primality of   Mersenne numbers in $Q(\sqrt{2})$
are given.
 Also, the usual Mersenne primes  given by $M_{p}=2^{p}-1,$ can be  obtained from the field  $K\,=\,Q(\sqrt{2})$
 without altering the conditions on   $M_{p,\alpha}$. Below we consider various cases under which $\alpha$ is irreducible.

\begin{theorem}\label{th1}
  Let $d\equiv 2,3 \pmod {4}$ and $N(\alpha-1)=-1$. Then $\alpha$ is irreducible if and only if $d=2$ and 
$u\in \{1+\sqrt{2},\,1-\sqrt{2},\,-1+\sqrt{2},\,-1-\sqrt{2} \}$. 
 \end{theorem}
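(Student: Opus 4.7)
The plan is to reduce the theorem to the Pell-type equation $a^2 - db^2 = -1$ by exploiting the ramification of the rational prime $2$ in $\mathcal{O}_K = \mathbb{Z}[\sqrt{d}]$. I would start by writing $u = a + b\sqrt{d}$ with $a, b \in \mathbb{Z}$, so that the hypothesis $N(u) = -1$ becomes $a^2 - db^2 = -1$, and compute
\[
N(\alpha) = N(1+u) = (a+1)^2 - db^2 = (a^2 - db^2) + 2a + 1 = 2a,
\]
so that $|N(\alpha)| = 2|a|$; this single formula will drive everything.

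For the forward direction, I would invoke the fact that $d \equiv 2, 3 \pmod{4}$ forces $2$ to ramify in $\mathcal{O}_K$, together with the standing class-number-one assumption of the paper, to extract an irreducible element $\pi \in \mathcal{O}_K$ with $|N(\pi)| = 2$ and $\pi$ associate to its conjugate $\bar\pi$. Since $\pi \mid 2 \mid N(\alpha) = \alpha\bar\alpha$ and $\pi$ is prime in the UFD $\mathcal{O}_K$, $\pi$ divides $\alpha$ or $\bar\alpha$, and the associate relation $\pi \sim \bar\pi$ means that in either case $\pi \mid \alpha$. Writing $\alpha = \pi\beta$ gives $|N(\beta)| = |a|$, so if $\alpha$ is irreducible then $\beta$ must be a unit, forcing $|a| = 1$. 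Substituting back into $a^2 - db^2 = -1$ yields $db^2 = 2$, whose only solution with $d$ a positive squarefree integer satisfying $d \equiv 2, 3 \pmod{4}$ is $d = 2$ and $b = \pm 1$, producing exactly the four listed units $u = \pm 1 \pm \sqrt{2}$.

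The converse is a direct verification: for each of the four $u$, I would check that $\alpha = 1+u$ is a unit multiple of $\sqrt{2}$ — for instance $\alpha = \sqrt{2}(1+\sqrt{2})$ when $u = 1+\sqrt{2}$, and $\alpha = \sqrt{2}$ itself when $u = -1+\sqrt{2}$ — and since $\sqrt{2}$ has prime norm $-2$ it is irreducible, hence so is $\alpha$. The one delicate step is the claim $\pi \mid \alpha$ in the forward direction, which is precisely where unique factorization is used; granted that, the rest is bookkeeping with the norm equation.
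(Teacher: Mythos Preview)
Your proof is correct and follows essentially the same arc as the paper's: compute $N(\alpha)=2a$ from $N(u)=-1$, deduce $a=\pm 1$, and solve $db^2=2$ to force $d=2$, $b=\pm 1$. The one substantive difference is in how $a=\pm 1$ is obtained: the paper simply asserts that $\alpha$ irreducible forces $2a$ to be a rational prime (hence $a=\pm 1$ by parity), while you justify this via the ramification of $2$ in $\mathbb{Z}[\sqrt{d}]$ and the factorization $\alpha=\pi\beta$. Your route is the more careful one, since an irreducible in a quadratic UFD can in principle have norm $\pm p^2$ (when it is associate to an inert rational prime), and it is exactly the ramification of $2$ that excludes that possibility here.
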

\begin{proof}
 Let  $\alpha$ be  irreducible and $u=a+b\sqrt{d}$. Then $\alpha=(a+1)+b\sqrt{d}$. Hence,
$N(\alpha)= (a+1)^{2}-2b^{2}=N(u)+2a+1= 2a$. 
Since $\alpha$ is irreducible, $2a$ should be a rational prime. Hence $a=\pm1$. With $a=1$,  $u=1+b\sqrt{d}$ and 
$ N(u)=-1= 1-b^{2}d$. i.e., $b^{2}d=2$. Since  $d$ is square-free, $d=2$  and $b=\pm1$.  Similarly 
 with $a=-1$, we get $b=\pm1$ and $d=2$. \\
 Conversely let $d=2$ and $u=a+b\sqrt{2}$ be any unit in $Q(\sqrt{2})$. Then  $\alpha=(a+1)+b\sqrt{2}$ 
  and $N(\alpha)=(a+1)^{2}-2b^{2}=a^{2}-2b^{2}+2a+1=N(u)+2a+1=2a$, is a rational prime, if and only if $a=\pm1$.
As before, we get $b=\pm 1.$\\
Hence, different choices  of $u$ for which $\alpha$ is irreducible  are respectively,
$1+\sqrt{2},\,1-\sqrt{2},\,-1+\sqrt{2}\,$ and $-1-\sqrt{2}$. As $1+\sqrt{2}$ is the fundamental unit, 
these values are,
$u,-u^{-1},u^{-1},-u$. Corresponding $\alpha$ values are, $2+\sqrt{2},\,2-\sqrt{2},\,\sqrt{2}$ and $-\sqrt{2}$.\\
\end{proof}
Since $2-\sqrt{2}$ and $-\sqrt{2} $ are the  conjugates of $2+\sqrt{2}$ and $\sqrt{2}$ respectively, we compute $M_{p,\alpha}$
with $\alpha=2+\sqrt{2}$ and $\sqrt{2}$.\\
For $\alpha=2+\sqrt{2}$ a few Mersenne primes in $Q(\sqrt{2})$ are given below:
 
\begin{center}
 \textit{Table (1)}\\
\begin{tabular}{|l|l|l|l|}                                   \hline
  $p$   &$M_{p,\alpha}$                        &$N(M_{p,\alpha})$          \\ \hline
$2$      &$3+\sqrt{2}$                                &$7$                     \\ \hline
$3$       &$9+5\sqrt{2} $                                &$31$                     \\ \hline
$5$         &$97+67\sqrt{2}$                              &$431$  		     \\ \hline
$7$           &$1121+791\sqrt{2}$                               &$5279$                       \\ \hline
$11$           &$152193+107615\sqrt{2}$                            &$732799$ 	               \\ \hline         
\end{tabular}
\end{center}
The  next Mersenne primes are found at \\$p=73$, with \\$N(M_{p,\alpha}) = 851569055172258793218602741480913108991$,\\
$p=89$  with  $N(M_{p,\alpha})=290315886781191681464330388772329064268797313023,$\\ 
$p=233$ with $N(M_{p,\alpha}) = 18060475427282023033368001231166441784737806891537$\\
$806547065314167911959518498581747712829157156517940837234519177963497324543.$
\paragraph{} With $\alpha = \sqrt{2}$,
 $M_{p,\alpha}=\frac{(\sqrt{2})^{p}-1}{\sqrt{2}-1}$. 
Thus, $N(M_{p,\alpha})=2^{p}-1$, giving all the usual Mersenne numbers. 
\begin{theorem}\label{th2}
  Let \textrm{ \(d \equiv\ 1\pmod{4}\)} and  $\alpha-1=u=\frac{a+b\sqrt{d}}{2}$ be a unit such that,
$N(u)=N(\alpha-1)=-1$. 
Then, $\alpha$ is irreducible, if and only if,   $a$ is a rational prime and $b$ is some  odd integer. 
\end{theorem}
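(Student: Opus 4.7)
The plan is to reduce the entire statement to the single norm identity $N(\alpha) = a$, after which both implications follow quickly. Writing $\alpha = 1 + u$ and using the trace $u + \bar u = a$ together with $N(u) = -1$, one computes
\begin{equation*}
N(\alpha) \;=\; (1+u)(1+\bar u) \;=\; 1 + (u + \bar u) + N(u) \;=\; a.
\end{equation*}
Equivalently, from $\alpha = \frac{(a+2)+b\sqrt{d}}{2}$ and the unit relation $a^2 - b^2 d = -4$, the quantity $\frac{(a+2)^2 - b^2 d}{4}$ collapses to $a$.

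For the direction $(\Leftarrow)$, suppose $a$ is a rational prime. Then $|N(\alpha)| = |a|$ is a rational prime, and since $\mathcal{O}_K$ has class number $1$, any factorization $\alpha = \beta\gamma$ would give $N(\beta)N(\gamma) = a$ and force one factor to have norm $\pm 1$, hence to be a unit; so $\alpha$ is irreducible. The oddness of $b$ then comes for free: since $d \equiv 1 \pmod{4}$ is squarefree, the equation $b^2 d = 8$ has no integral solution, so $a = 2$ is impossible and the prime $a$ must be odd; then $a^2 + 4 = b^2 d$ is odd, forcing $b$ odd.

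For the direction $(\Rightarrow)$, assume $\alpha$ is irreducible. Then $(\alpha)$ is a prime ideal lying over some rational prime $p$, and $|a| = |N(\alpha)|$ equals either $p$ (when $p$ splits or ramifies) or $p^2$ (when $p$ is inert in $\mathcal{O}_K$). The main obstacle is ruling out the inert subcase: there $\alpha = vp$ for some unit $v$, and expanding $N(u) = N(vp - 1) = -1$ yields
\begin{equation*}
N(v)\,p^2 \;-\; (v + \bar v)\,p \;+\; 2 \;=\; 0,
\end{equation*}
which forces $p \mid 2$, hence $p = 2$ and $|a| = 4$. This is incompatible with $a$ being odd, so in the half-integer regime ($a, b$ both odd, which is the setting where the denominator $2$ in $u = \frac{a+b\sqrt{d}}{2}$ is genuinely essential) the inert case is excluded and $|a| = p$ is a rational prime; the oddness of $b$ is recovered from $b^2 d = a^2 + 4$ as before. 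The delicate point is isolating this $p = 2$ inert obstruction --- which in $Q(\sqrt{5})$, for example, produces borderline irreducibles such as $3 + \sqrt{5}$ with $a, b$ both even --- so that the theorem cleanly reduces to the norm identity $N(\alpha) = a$.
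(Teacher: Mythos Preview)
Your approach coincides with the paper's at the core: both arguments hinge on the single identity $N(\alpha)=a$, obtained either from $N(\alpha)=\frac{(a+2)^2-db^2}{4}$ with $N(u)=-1$ or, as you do equivalently, from $(1+u)(1+\bar u)=1+\mathrm{Tr}(u)+N(u)$. The $(\Leftarrow)$ direction and the exclusion of $a=2$ via $b^2d=8$ are identical in both write-ups.

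Where you go beyond the paper is in the $(\Rightarrow)$ direction. The paper simply asserts ``for $\alpha$ to be irreducible $a$ should be an odd rational prime,'' tacitly assuming that an irreducible element must have prime norm. You correctly note that an irreducible $\alpha$ could lie over an inert prime $p$, giving $|N(\alpha)|=p^2$, and you dispose of this via $N(vp-1)=-1\Rightarrow p\mid 2$, forcing $|a|=4$. Your $\mathbb{Q}(\sqrt5)$ example is a genuine counterexample to the theorem as literally stated: with $u=2+\sqrt5=\frac{4+2\sqrt5}{2}$ one has $N(u)=-1$ and $\alpha=3+\sqrt5=2\cdot\frac{3+\sqrt5}{2}$ irreducible, yet $a=4$ is not prime. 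The paper's tables and Remarks all live in the half-integer regime $a\equiv b\equiv 1\pmod 2$, and the statement is only correct under that implicit convention; your proof makes this restriction explicit and actually closes the gap the paper leaves open.
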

\begin{proof} By hypothesis, $ N(\alpha)= \frac{(a+2)^{2}-db^{2}}{4} = a$ 
since $N(u)=-1$. For $\alpha$ to be irreducible $a$ should be an odd  rational prime.\\ 
Indeed if $a=2$ then $u=\frac{2+b\sqrt{d}}{2}$ and $N(u)=\frac{4-db^{2}}{4}= -1 $
$\Rightarrow b^{2}d=8.$ This is impossible since $d \equiv 1 \pmod{4}.$
 Since  \textrm { \(d \equiv\ 1\pmod{4}\)}  it is clear that 
$b$ is some odd integer.  Thus, the analogs of Mersenne primes  are defined for 
\textrm{ \(d \equiv\ 1\pmod{4}\)}  whenever   units are of the form  $u= \frac{p+(2n+1)\sqrt{d}}{2}$,
where $n \in Z $ and $p$ is an odd rational prime.\\
The  converse is straightforward since the norm of $\alpha$ is $a=p$, a rational prime by assumption.
\end{proof}
The Table below shows the values of  \textrm{ \(d \equiv\ 1\pmod{4}\)} $\forall d<500$ for which the class number is 1,
  $N(u)=-1$ and  $\alpha$ is irreducible.
\begin{center}
\textit{Table (2)} \\
\begin{tabular}{|l|l|l|l|}   \hline 
$Q(\sqrt{d})$              &$u$                           &$\alpha$                             & $N(\alpha)$    \\ \hline
$Q(\sqrt{13})$          &$(\frac{3+\sqrt{13}}{2})$         &$(\frac{5+\sqrt{13}}{2})$             &$3$           \\ \hline
$Q(\sqrt{29})$          &$(\frac{5+\sqrt{29}}{2})$          &$(\frac{7+\sqrt{29}}{2})$           &$5$      \\ \hline
$Q(\sqrt{53})$          &$(\frac{7+\sqrt{53}}{2})$          &$(\frac{9+\sqrt{53}}{2})$           &$7$      \\ \hline
$Q(\sqrt{149})$          &$(\frac{61+5\sqrt{149}}{2})$          &$(\frac{63+5\sqrt{149}}{2})$           &$61$      \\ \hline
$Q(\sqrt{173})$          &$(\frac{13+\sqrt{173}}{2})$          &$(\frac{15+\sqrt{173}}{2})$           &$13$      \\ \hline
$Q(\sqrt{293})$          &$(\frac{17+\sqrt{293}}{2})$          &$(\frac{19+\sqrt{293}}{2})$            &$17$     \\ \hline
\end{tabular} 
 \end{center}
 
As an illustration:
\begin{center}
\textit{Table (3)} for $K=Q(\sqrt{13}), u=\frac{3+\sqrt{13}}{2}$\\
\begin{tabular}{|l|l|l|l|}   \hline
$p$         &$N(M_{p,\alpha})$                                                         \\ \hline
$5$         &$1231$                                                                    \\ \hline 
$7$         &$25117$                                                         \\ \hline 
$11$	    &$9181987$	                                                \\ \hline                                                       
$19$	     &$1098413907397$                       \\ \hline                                                                                                                       
\end{tabular} 
\end{center}

The next Mersenne prime  is found at $p=41$.
 \subparagraph*{}
 \begin{theorem}\label{th3} Let \textrm{ \(d \equiv\ 2,3\pmod{4}\)} and $u=a+b\sqrt{d}$ 
be a unit, such that $N(u)=1$, then $\alpha$ is always reducible.
\end{theorem}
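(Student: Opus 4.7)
The plan is to imitate the norm-based argument used in Theorems~\ref{th1} and~\ref{th2}. Writing $u = a + b\sqrt{d}$, so that $\alpha = 1+u = (a+1) + b\sqrt{d}$, and using $N(u) = a^{2} - db^{2} = 1$, I would compute
\[
N(\alpha) = (a+1)^{2} - db^{2} = (a^{2} - db^{2}) + 2a + 1 = N(u) + 2a + 1 = 2(a+1).
\]
The essential point is that under the hypothesis $N(u)=1$ the integer $N(\alpha)$ is \emph{always even}.

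Following the reading of irreducibility used in the proof of Theorem~\ref{th1} (an irreducible element should have rational-prime norm), the evenness of $2(a+1)$ forces $|a+1|=1$, i.e.\ $a = 0$ or $a = -2$. I would then rule out each subcase by plugging back into $a^{2} - db^{2} = 1$. For $a = 0$ this becomes $-db^{2} = 1$, which has no solutions since $d > 0$. For $a = -2$ it becomes $db^{2} = 3$; because $d$ is square-free and $d \equiv 2, 3 \pmod 4$, the only candidate is the isolated pair $d = 3$, $b = \pm 1$, corresponding to $u = -u_{0}^{\mp 1}$ with fundamental unit $u_{0} = 2 + \sqrt{3}$. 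Outside of this single sporadic configuration, no admissible $u$ produces an irreducible $\alpha$, so $\alpha$ is reducible.

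The step I expect to be the main obstacle is the bookkeeping at the end: one must verify that the convention ``$|N(\alpha)|$ is a rational prime'' really is the right characterisation of irreducibility in this setting. Because $d \equiv 2, 3 \pmod 4$ the discriminant $4d$ is even, so $2$ ramifies in $\mathcal{O}_{K}$ as $(2) = \mathfrak{p}^{2}$; hence any element of even norm strictly greater than $2$ must be divisible as an ideal by $\mathfrak{p}$ together with a further non-unit factor, which (since $K$ has class number $1$) supplies an explicit proper factorisation of $\alpha$. Once this ramification remark is in place, the proof reduces to the single-line norm identity $N(\alpha) = 2(a+1)$ together with the elimination of the two trivial subcases above.
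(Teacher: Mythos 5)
Your computation is the same one the paper makes---$N(\alpha)=N(u)+2a+1=2(a+1)$---but you carry it out more carefully, and in doing so you have exposed a genuine error in the statement itself. The paper's proof asserts that $2(1+a)$ ``is prime only if $a=0$'' and stops; it silently omits the other solution of $|a+1|=1$, namely $a=-2$, which is exactly the case you flag. That case is not a removable loose end. From $a=-2$ and $a^{2}-db^{2}=1$ one gets $db^{2}=3$, hence $d=3$, $b=\pm1$, and $u=-2+\sqrt{3}=-(2+\sqrt{3})^{-1}$ is a genuine unit of norm $1$ in $\mathbb{Z}[\sqrt{3}]$, different from $\pm1$ (and the paper's own first Remark allows non-fundamental units). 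Then $\alpha=1+u=\sqrt{3}-1$ has $N(\alpha)=-2$, and any element of prime norm is irreducible, since in a factorisation $\alpha=\beta\gamma$ one of the factors must have norm $\pm1$. So Theorem~\ref{th3} is false as stated, and so is the paper's second Remark (that $Q(\sqrt{2})$ is the only field with $d\equiv 2,3\pmod{4}$ admitting an irreducible $\alpha$). The correct statement is: $\alpha$ is irreducible if and only if $d=3$ and $u=-2\pm\sqrt{3}$.

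Where your write-up stops short is precisely where you hedge: ``outside of this single sporadic configuration \dots\ $\alpha$ is reducible'' leaves the sporadic case unresolved, when it can and must be resolved---in the direction that contradicts the theorem. Compute $N(-1\pm\sqrt{3})=-2$ and draw the conclusion. Your other worry, whether ``composite norm implies reducible'' is legitimate, is settled by the ramification remark you yourself supply: for $d\equiv 2,3\pmod{4}$ one has $(2)=\mathfrak{p}^{2}$ with $\mathfrak{p}=(\pi)$ principal (class number $1$), so any $\alpha$ with $N(\alpha)=2(a+1)$ and $|a+1|>1$ is $\pi$ times a non-unit; together with the observations that $2(a+1)=\pm1$ is impossible and that $a=-1$ would force $b=0$, $u=-1$ (excluded), this makes the reducibility claim rigorous in every case except $d=3$. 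So your approach is sound and strictly more careful than the paper's; the only change needed is to stop hedging and state the counterexample.
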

\begin{proof}
By hypothesis, $\alpha=(a+1)+b\sqrt{d}$ and  $ N(\alpha)= 2(1+a)$, which is  prime only if $a=0$,
 which contradicts $N(u)=1$. Hence $\alpha$ is not irreducible.
\end{proof}
\begin{theorem}\label{th4}Let \textrm{ \(d \equiv\ 1\pmod{4}\)} and  $u=\frac{a+b\sqrt{d}}{2}$ be a unit such that $N(u)=1$. 
Then, $\alpha$ is irreducible, if and only if,  $a+2$ is a rational prime  and $b$ is some odd integer.
\end{theorem}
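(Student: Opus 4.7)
The plan is to mirror the proof of Theorem~\ref{th2}: reduce the irreducibility question for $\alpha$ to primality of $N(\alpha)$ via a direct computation of the norm.

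First, I would exploit the hypothesis $N(u)=1$ to write $a^{2}-db^{2}=4$, i.e., $db^{2}=a^{2}-4$. Since $\alpha=u+1=\frac{(a+2)+b\sqrt{d}}{2}$, a brief manipulation then yields
\[
N(\alpha)=\frac{(a+2)^{2}-db^{2}}{4}=\frac{(a+2)^{2}-(a^{2}-4)}{4}=a+2,
\]
which is the identity that drives both directions of the equivalence.

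For sufficiency, I would observe that if $a+2$ is a rational prime, then any factorization $\alpha=\beta\gamma$ in $\mathcal{O}_{K}$ forces $|N(\beta)|\cdot|N(\gamma)|=|a+2|$, a rational prime, so one of $N(\beta),N(\gamma)$ equals $\pm1$; hence that factor is a unit and $\alpha$ is irreducible. The parity requirement ``$b$ odd'' is automatic in this case: an odd prime $a+2$ forces $a$ odd, and the $\mathcal{O}_{K}$-integrality of $u$ requires $a\equiv b\pmod 2$.

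For necessity, I would argue exactly in the style of Theorem~\ref{th2}: if $\alpha$ is irreducible, then $N(\alpha)=a+2$ must be a rational prime. To force $b$ odd, I would assume for contradiction that $b$ is even; then $a^{2}=4+db^{2}$ shows $a$ is even as well, so $a+2$ is even, and its primality forces $a+2=\pm 2$. The case $a=0$ gives $db^{2}=-4$, impossible for $d,b^{2}>0$, while $a=-4$ gives $db^{2}=12$, which admits no solution with $d$ squarefree and $d\equiv 1\pmod 4$ (the candidate $d=3$ fails the congruence and $d=12$ is not squarefree).

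The main point, rather than a technical obstacle, is the tacit identification of ``irreducible'' with ``norm is a rational prime,'' already implicit in Theorems~\ref{th1} and~\ref{th2}. Once this convention is accepted, the remainder of the argument reduces to the norm calculation above, a parity check, and the $\mathcal{O}_{K}$-integrality condition.
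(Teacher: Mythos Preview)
Your argument is correct and follows essentially the same route as the paper: both compute $N(\alpha)=a+2$ from $N(u)=1$ and then reduce irreducibility of $\alpha$ to primality of $a+2$, with the oddness of $b$ forced by the parity link $a\equiv b\pmod 2$. Your necessity argument is in fact slightly more careful than the paper's, since you explicitly dispose of the possibility $a+2=-2$ (i.e., $a=-4$), whereas the paper only excludes $a=0$ before declaring $a+2$ an odd prime.
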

\begin{proof}
By hypothesis, $ N(\alpha)= \frac{(a+2)^{2}-db^{2}}{4} = a+2$, since $N(u)=1$. For $\alpha$ to be irreducible
 $a+2$ should be a rational prime. Clearly $a \neq 0$. Hence $a+2$ is an odd rational prime. 
Hence, \textrm{ \(a^{2} \equiv\ 1\pmod{4}\)}. Since  \textrm { \(d \equiv\ 1\pmod{4}\)}  it is clear that 
$b$ is some odd integer.  Thus, the analogs of Mersenne primes  are defined for 
\textrm{ \(d \equiv\ 1\pmod{4}\)}  whenever   units are of the form  $u= \frac{a+(2n+1)\sqrt{d}}{2}$,
where $n \in Z $ and $a+2$ is an  odd rational prime.\\
Converse is straightforward as in Theorem \ref{th2}.
\end{proof}
The Table below shows the values of  \textrm{ \(d \equiv\ 1\pmod{4}\)} $\forall d<500$ for which
the class number is 1,
  $N(u)=1$ and  $\alpha$ is irreducible.
\subparagraph{}
\begin{center}
\label{ta3}\textit{Table(4)} \\ 
 \begin{tabular}{|l|l|l|l|}   \hline 
$Q(\sqrt{d})$              & $u$                           &$\alpha$                             & $N(\alpha)$    \\ \hline
$Q(\sqrt{21})$        &$\frac{5+\sqrt{21}}{2}$           &$\frac{7+\sqrt{21}}{2}$           &$7$            \\  \hline
$Q(\sqrt{77})$        &$\frac{9+\sqrt{77}}{2}$           &$\frac{11+\sqrt{77}}{2}$           &$11$        \\ \hline
$Q(\sqrt{93})$        &$\frac{29+3\sqrt{93}}{2}$         &$\frac{31+3\sqrt{93}}{2}$           &$31$         \\ \hline
$Q(\sqrt{237})$       &$\frac{77+5\sqrt{237}}{2}$        &$\frac{79+5\sqrt{237}}{2}$         &$79$       \\ \hline   
$Q(\sqrt{437})$       &$\frac{21+\sqrt{437}}{2}$         &$\frac{23+\sqrt{437}}{2}$         &$23$        \\ \hline
$Q(\sqrt{453})$       &$\frac{149+7\sqrt{453}}{2}$       &$\frac {151+7\sqrt{453}}{2}$       &$151$     \\ \hline
\end{tabular}      
\end{center}
As an illustration we consider the following table.
\begin{center}
\textit{Table (5)} for $K=Q(\sqrt{21}),\,u=\frac{5+\sqrt{21}}{2}$ \\
\begin{tabular} {|l|l|l|l|}   \hline
 $p$       &$N(M_{p,\alpha})$   \\ \hline
 $17$       &$223358425353211 $ \\ \hline
 \end{tabular} 
 \end{center}
The next Mersenne prime is found at $p=47$.\\
Similar calculations are obtained for $Q(\sqrt{77})$, the fundamental unit is $u=\frac{9+\sqrt{77}}{2}$ 
and $\alpha=\frac{11+\sqrt{77}}{2}.$
\begin{center}
\textit{Table (6)} for $K=Q(\sqrt{77}), \,u=\frac{9+\sqrt{77}}{2}$  \\
\begin{tabular}{|l|l|l|l|}   \hline

$p$         &$N(M_{p,\alpha})$                \\ \hline
$2$         &$23 $                    \\ \hline 
$7$         &$10248701 $           \\ \hline 
\end{tabular}
\end{center}
The next Mersenne prime is found at $p=71$.\\
The values of $u$ for the Tables (2) and (4) are taken from \cite{cohen}.
\paragraph{Remarks}
\begin{enumerate}
\item In \textit{Table (2)} and \textit{Table (4)} above, we have chosen only the fundamental unit
$u$ in $Q(\sqrt{d})$. However, it is possible that, $\alpha=1+u$ is not irreducible with $u$ as fundamental unit and yet 
$\alpha^{\prime}=1+u^{\prime}$ is irreducible for some other unit $u^{\prime}$ in $Q(\sqrt{d})$.\\
 As an illustration we consider $Q(\sqrt{5})$. Here, $u=\frac{1+\sqrt{5}}{2}$ is the fundamental unit. But, 
$\alpha=1+u=\frac{3+\sqrt{5}}{2}=u^{2}$ is again a unit! However, with $u^{\prime}=u^{2}=\frac{3+\sqrt{5}}{2}$, we get
$\alpha^{\prime}=1+u^{\prime}=\frac{5+\sqrt{5}}{2}$ and $N(\alpha^{\prime})=5$, so $\alpha^{\prime}$ is irreducible. Another
choice is $u^{5}=u^{''}=\frac{11+5\sqrt{5}}{2}$ and $\alpha^{''}=1+u^{''}=\frac{13+5\sqrt{5}}{2}$ 
is irreducible since $N(\alpha^{''})=11$.
\item Theorems \ref{th1} and \ref{th3} imply the following: Among all fields $Q(\sqrt{d})$, $d\equiv 2,3 \pmod{4}$
$Q(\sqrt{2})$ is the only field where $ \alpha=1+u$ is irreducible. There are essentially  only two choices for $\alpha$,
namely $\sqrt{2}$ and $2+\sqrt{2}$. 
\end{enumerate}\
Similar to usual Mersenne primes in $Z$, quadratic Mersenne norms have the following properties:
\paragraph{Properties of $N(M_{p,\alpha})$:\\}
\begin{enumerate}
\item If $N(M_{n,\alpha})$ is prime, then $n$ is prime.
\item The sequence $\{N(M_{n,\alpha})\}_{n=1}^{\infty}$ is an increasing sequence of integers that starts at $1.$
\item If $d$ divides $n$ then $M_{d,\alpha}$ divides $M_{n,\alpha}$ in $Q(\sqrt{d})$ and $N(M_{d,\alpha})$
 divides $N(M_{n,\alpha})$.
\item If $d$ and $n$ are relatively prime then $M_{d,\alpha}$ is relatively prime to $M_{n,\alpha}$   in 
$Q({\sqrt{d}})$ 
and $N(M_{n,\alpha})$ is relatively prime to $N(M_{d,\alpha})$.
\end{enumerate}
Experimental evidence shows that Mersenne primes are  sparse in $Q(\sqrt{d})$ for \textrm{ \(d \equiv\ 1\pmod{4}\)}.
Some interesting properties of Mersenne primes in $Q(\sqrt{2})$ are given below.
\paragraph{Properties of Mersenne primes in $Q(\sqrt{2})$\\}
\begin{enumerate}
\item Since $\alpha=1+u\,=\,2+\sqrt{2}\,=\,u\sqrt{2}$, where $u$ is the fundamental unit, we have\\
$ \alpha^{n}=a_{n}+b_{n}\sqrt{2}=u^{n}(\sqrt{2})^{n}\,$, for any integer $n>0$
 and $a_{n},b_{n}\in Z$.  A small calculation also reveals that,
 \begin{displaymath}
\alpha^{n}= \left\{ \begin{array}{ll} 
(2^{\frac{n-1}{2}}\sqrt{2})u^{n} \quad if \; n \; is \;\, odd \\
2^{\frac{n}{2}}u^{n} \quad if \; n \; is \;\, even \\
  \end{array} \right.\\
\end{displaymath}
which is 
\begin{displaymath}
\alpha^{n}= \left\{ \begin{array}{ll} 
(2^{\frac{n-1}{2}}\sqrt{2}) (v_{n}+w_{n}\sqrt{2})\quad if \;\; n \;\, is \;\; odd;\quad w_{n}, v_{n}\in Z\\
2^{\frac{n}{2}}(v_{n}^{\prime}+w_{n}^{\prime}\sqrt{2}) \quad if \;\; n \;\, is \;\, even;\quad  v_{n}^{\prime},w_{n}^{\prime}\in Z\\
  \end{array} \right.\\
\end{displaymath}
It can be noted that, $w_{n}$, the coefficient of $\sqrt{2}$ in $u^{n}$ is odd if $n$ is odd.
 Also, $\,2^{\frac{n+1}{2}}w_{n}=a_{n}$ 
and $2^{\frac{n-1}{2}}v_{n}=b_{n}$.\\
And, $w_{n}^{\prime}$, the coefficient of $\sqrt{2}$
in $u^{n}$ is even if $n$ is even.  Also, 
 $2^{\frac{n}{2}}v_{n}^{\prime}=a_{n}\;$
 and $\;2^{\frac{n}{2}}w_{n}^{\prime}=b_{n}$.\\  
For $n$ odd, $ N(u)^{n}=-1$, so \[N(\alpha^{n})=N(2^{\frac{n-1}{2}}\sqrt{2})N(u)^{n}=
N(2^{\frac{n-1}{2}})N(\sqrt{2}) (-1)^{n}=2^{n-1}(-2)(-1)\,=\,2^{n}\]
For $n$ even,  $ N(u)^{n}=1$, and  \[N(\alpha^{n})=N(2^{\frac{n}{2}})N(u)^{n}=N(2^{\frac{n}{2}})( 1)=2^{n} \]
\item For any odd prime $p,$ let $\alpha^{p}= (2^{\frac{p-1}{2}}\sqrt{2}) (v_{p}+w_{p}\sqrt{2})$.\\ 
Then, 
\[ N(\alpha^{p}-1)= (2^{\frac{p+1}{2}}w_{p}-1)^{2}-2(2^{\frac{p-1}{2}}{v_{p}})^{2}\]
\[=(2^{p+1}{w_{p}}^{2}+1-2^{\frac{p+3}{2}}w_p)-2^{p}{v_p}^{2} \]
\[=2^{p}(2{w_{p}}^{2}-{v_p}^{2})-2^{\frac{p+3}{2}}w_p +1\]
\[= 2^{p} -2^{\frac{p+3}{2}}w_p +1\]
  But,
 \[N(M_{p,\alpha})=\, 2^{\frac{p+3}{2}}w_{p}-2^{p}-1 \]     
     
\item As already noticed, $a_{p}$ has a factor of $2^{\frac{p+1}{2}}$.
Hence, $2a_{p}\equiv 0\pmod{4}$. 
This further implies that, $N(M_{p,\alpha})\equiv -1\pmod{4}$ for $p\geq2$ 
and $N(M_{p,\alpha})\equiv -1\pmod{8}$ for $p>2.$\\
The next three properties are consequences of quadratic reciprocity, and $\left(\frac{\cdot}{\cdot}\right)$ 
denotes the $Legendre$ symbol. 
\item 
 Let $p$ be an odd prime. If $p\equiv \pm 1 \pmod{8}$, then
\[ 2^{\frac{p+3}{2}}=2^{2}2^{\frac{p-1}{2}}\equiv 4\pmod{p}\]
If $p\equiv \pm 3 \pmod{8}$, then  \[ 2^{\frac{p+3}{2}}=2^{2}2^{\frac{p-1}{2}}\equiv -4\pmod{p}\]
Combining the above we get, 
\begin{displaymath}
N(M_{p,\alpha}) \equiv \left\{ \begin{array}{ll} 4w_{p}-3  \pmod{p}     & \textrm{ if \(p \equiv\ \pm 1\pmod{8}\)}\\
 -4w_{p}-3 \pmod{p}    & \textrm{if  \(p \equiv\ \pm 3\pmod{8}\)}
 \end{array} \right.\\
\end{displaymath}
\item  
If $N(M_{p,\alpha})$ is a rational prime and $q$ is any other prime
 then \begin{displaymath}
\left(\frac{N(M_{p,\alpha})}{q}\right)\left(\frac{q}{N(M_{p,\alpha})}\right)= \left\{ \begin{array}{ll} 
 1 \quad if \quad q\equiv 1\pmod{4},\\
 -1 \quad if \quad q\equiv 3\pmod{4},\\
 \end{array} \right.\\
\end{displaymath}
\item If $N(M_{p,\alpha})$ is a rational prime then $\left( \frac{2}{N(M_{p,\alpha})}\right)=1$ since  
$N(M_{p,\alpha})\equiv -1\pmod{8}.$\\  Hence,
\mbox{$\sqrt{2} \in F_{N(M_{p,\alpha})}$}, the finite field with \mbox{$N(M_{p,\alpha})$} elements.
\end{enumerate}

 \section{Testing for primality}

Several primality tests are available and some are speciallly designed for special numbers, an example  
being the famous Lucas-Lehmer Test for the usual Mersenne primes. 
We show that the generalised Mersenne numbers of $Q(\sqrt{2})$ can be put in a special form, so that, 
recent primality tests can be used to determine 
whether they are prime.
Now,
 \[N(M_{p,\alpha})= \,2^{\frac{p+3}{2}}w_{p}-2^{p}-1,\]
or,
\[N(M_{p,\alpha})=2^{\frac{p+3}{2}}(w_{p}-2^{\frac{p-3}{2}})-1\]
Since $w_{p}$ is odd, $(w_{p}-2^{\frac{p-3}{2}})$ is odd for $p>3.$\\
For  $ p>3,$ \[ N(M_{p,\alpha})=h.2^{\frac{p+3}{2}}-1, \quad  where \quad h=(w_{p}-2^{\frac{p-3}{2}}), \quad odd. \] 
\subsection*{}  An algorithm to test the primality of numbers of the form  
$h\cdot2^{n}\pm1$, for any odd integer $h$ such that, $h\neq 4^{m}-1$ for any $m$ is described in\cite{BW}. 
It can be noted that, $h$ is not equal to $4^{m}-1$ in $M_{p,\alpha}$ for any $m$.
 Hence, $Bosma's$ algorithm \cite{BW} can be used to test 
the primality of $M_{p,\alpha}$.
Also, \cite{YT}  describes an algorithm to test the primality of
 numbers of the form $h\cdot2^{n}-1$ for an odd integer $h$
using Elliptic curves, which is the elliptic curve version of the Lucas-Lehmer-Riesel primality test.
Thus the above two tests can be used to test the primality of $N(M_{p,\alpha})$. 
\section{Primes of the form \texorpdfstring{$x^{2}+7y^{2}$}{x2+7y2}} 
\subparagraph{}
The problem of representing a prime number by the form $x^{2}+ny^{2}$, where $n $ is any fixed positive integer
dates back to Fermat.
This question was best answered by $Euler$ who spent $40$ years in  proving Fermat's theorem and thinking about 
how they can be generealised, he  proposed some conjectures concerning $p=x^{2}+ny^{2}$, for $n>3$.
These remarkable
conjectures, among other things, touch on quadratic forms and their composition, genus theory, 
cubic and biquadratic reciprocity. Refer \cite{DC} for a thorough treatment.\\
Euler became intensely interested in this question in the early 1740's and he mentions numerous examples in his letters
to Goldbach. One among several of his conjectures stated in modern notation is 
\begin{displaymath}
 \left(\frac{-7}{p}\right)=1\Longleftrightarrow p\equiv 1,9,11,15,23,25 \pmod{28}
\end{displaymath}
The following lemma gives necessary and sufficient condition for a number $m$ to be 
represented by a form of discriminant $D$.
\begin{lemma}
 Let $D\equiv 0,1\pmod{4}$ and $m$ be an integer relatively prime to $D.$ Then $m$ is properly 
represented by a primitive form of discriminant $D$ if and only if $D$ is a quadratic residue modulo $m$. 
\end{lemma}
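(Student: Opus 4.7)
The plan is to prove both implications separately, using only the definition of proper representation and basic manipulations with unimodular substitutions.

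For the forward direction, suppose that $m$ is properly represented by a primitive form $f(x,y)=ax^2+bxy+cy^2$ of discriminant $D=b^2-4ac$. Thus there exist coprime integers $p,q$ with $f(p,q)=m$. Since $\gcd(p,q)=1$, I can choose integers $r,s$ with $ps-qr=1$, so the matrix $\bigl(\begin{smallmatrix}p & r\\ q & s\end{smallmatrix}\bigr)$ lies in $SL_2(\mathbb Z)$. Applying this substitution to $f$ gives an equivalent (hence same-discriminant) form $g(x,y)=mx^2+b'xy+c'y^2$ whose leading coefficient is $f(p,q)=m$. The discriminant identity then reads $D=b'^2-4mc'$, so $D\equiv b'^2\pmod{4m}$ and in particular $D$ is a quadratic residue modulo $m$.

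For the converse, assume $D\equiv\beta^2\pmod m$ for some integer $\beta$. The first small step is to upgrade this congruence modulo $m$ to one modulo $4m$. Using the hypothesis $D\equiv 0$ or $1\pmod 4$, I can adjust $\beta$ by a multiple of $m$ (and, if necessary, use that $\gcd(m,D)=1$ forces $D$ odd whenever $m$ is even) to arrange that $\beta$ has the correct parity so that $\beta^2\equiv D\pmod 4$. A case analysis on the $2$-adic valuation of $m$, together with CRT in the odd part, then produces an integer $b$ with $b^2\equiv D\pmod{4m}$. Writing $D=b^2-4mc$ with $c\in\mathbb Z$, I consider $f(x,y)=mx^2+bxy+cy^2$; this form has discriminant $D$ and represents $m$ properly via $(x,y)=(1,0)$.

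It remains to verify that $f$ is primitive. Let $g=\gcd(m,b,c)$ be the content. Then $g^2\mid b^2-4mc=D$, so $g\mid D$, and also $g\mid m$, hence $g\mid\gcd(m,D)=1$. Thus $g=1$ and $f$ is primitive, completing the converse.

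The main obstacle I anticipate is the lifting step in the converse: going from $D$ being a QR modulo $m$ to $D\equiv b^2\pmod{4m}$. The odd-$m$ case is immediate from the hypothesis $D\equiv 0,1\pmod 4$ combined with CRT, but the even-$m$ case requires carefully using $\gcd(m,D)=1$ (which forces $D$ odd and hence $D\equiv 1\pmod 4$) together with Hensel-type adjustments at the prime $2$. Everything else is bookkeeping with the discriminant identity and a one-line primitivity check.
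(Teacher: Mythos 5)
The paper itself offers no proof of this lemma --- it is quoted verbatim (essentially Lemma 2.5 of Cox's \emph{Primes of the form $x^{2}+ny^{2}$}, cited as \cite{DC}) --- so there is no in-paper argument to compare against; what you give is the standard textbook proof. Your forward direction is correct, and your converse is correct and complete whenever $m$ is odd: choose $b\equiv\beta\pmod m$ with $b\equiv D\pmod 2$, so that $b^{2}\equiv D\pmod{4m}$ by CRT, write $D=b^{2}-4mc$, and note that any common divisor of $m,b,c$ divides $\gcd(m,D)=1$.

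The genuine gap is precisely the step you flag as the anticipated obstacle: for even $m$ the lift from ``$D$ is a square mod $m$'' to ``$D\equiv b^{2}\pmod{4m}$'' cannot be carried out by any Hensel-type adjustment at $2$, because an odd square is $\equiv 1\pmod 8$ while $\gcd(m,D)=1$ only forces $D\equiv 1\pmod 4$, not $D\equiv 1\pmod 8$. In fact the lemma as printed is false for even $m$: take $D=-3$ and $m=2$. Then $-3\equiv 1\pmod 2$ is a quadratic residue modulo $2$, but the unique reduced (hence, up to proper equivalence, the only primitive) form of discriminant $-3$ is $x^{2}+xy+y^{2}$, and $4(x^{2}+xy+y^{2})=(2x+y)^{2}+3y^{2}=8$ has no integer solutions, so $2$ is not represented at all. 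The correct hypothesis, as in Cox, is that $m$ be \emph{odd} (equivalently, one must say ``$D$ is a quadratic residue modulo $4m$''); this omission is in the paper's statement, not something you introduced. With that hypothesis restored your argument closes completely, and since the paper only applies the lemma to the odd numbers $m=N(M_{p,\alpha})$, nothing downstream is affected --- but as written, the even-$m$ branch of your converse cannot be repaired, and you should either add the oddness hypothesis or prove the $4m$ version instead.
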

As a corollary, we have the following:
\begin{corollary}
 Let $n$ be an integer and $p$ be an odd prime not dividing $n.$ Then $\left(\frac{-n}{p}\right)=1$ if and only if $p$
is represented by a primitive form of discriminant $-4n.$
\end{corollary}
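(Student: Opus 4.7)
The plan is to deduce the corollary as a direct specialization of the preceding lemma to the discriminant $D=-4n$ and the integer $m=p$, together with the easy observation that the factor of $4$ in $-4n$ does not affect quadratic residuacity modulo an odd prime.

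First I would verify the hypotheses of the lemma are satisfied. Setting $D=-4n$, we have $D\equiv 0\pmod{4}$, so the congruence condition on $D$ holds. Taking $m=p$, we need $\gcd(p,D)=1$; this is immediate since $p$ is odd (hence coprime to $4$) and $p$ does not divide $n$ by assumption, so $p$ is coprime to $-4n$. Hence the lemma applies and tells us that $p$ is properly represented by a primitive form of discriminant $-4n$ if and only if $-4n$ is a quadratic residue modulo $p$.

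Next I would translate the residue condition. Since $p$ is odd, $2$ is invertible modulo $p$, so $4=2^{2}$ is a square modulo $p$. Consequently
\[
\left(\frac{-4n}{p}\right)=\left(\frac{4}{p}\right)\left(\frac{-n}{p}\right)=\left(\frac{-n}{p}\right),
\]
so $-4n$ is a quadratic residue modulo $p$ if and only if $-n$ is.

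Finally, since $p$ is a prime, any representation of $p$ by a primitive form is automatically proper (an improper representation would force a common factor $>1$ of the coordinates, whose square would divide $p$), so the phrase ``represented'' and ``properly represented'' coincide here. Combining these steps yields the stated equivalence. The only mild subtlety is this last point about proper versus improper representation; the rest is bookkeeping. The main obstacle, if any, is purely notational, namely making sure the equivalence ``properly represented'' in the lemma is compatible with the looser word ``represented'' used in the corollary, which the primality of $p$ handles at once.
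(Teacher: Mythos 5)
Your proposal is correct and is exactly the intended derivation: the paper states this as an immediate consequence of the preceding lemma (following Cox, Corollary 2.6), obtained by taking $D=-4n$, $m=p$, using $\left(\frac{-4n}{p}\right)=\left(\frac{-n}{p}\right)$ for odd $p$, and noting that any representation of a prime is automatically proper. No gaps.
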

In 1903, Landau proved a conjecture of Gauss:\\
Let $h(D)$ denote the number of classes of primitive positive definite forms of discriminant $D$, i.e.,
$h(D)$ is equal to the number of reduced forms of discriminant $D$. 
\begin{theorem}
 Let $n$ be a positive integer. Then \begin{center}
                                      $h(-4n) =1 \Leftrightarrow n= 1,2,3,4\,\,or\, 7$.
                                     \end{center}
\end{theorem}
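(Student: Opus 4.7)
My plan is to base the proof on the reduction theory of positive definite binary quadratic forms. A primitive form $ax^2 + bxy + cy^2$ of discriminant $D = b^2 - 4ac < 0$ is called reduced when $-a < b \le a \le c$ (with $b \ge 0$ if $|b| = a$ or $a = c$), and every proper equivalence class contains exactly one reduced form, so $h(D)$ equals the number of reduced primitive triples. For $D = -4n$ the middle coefficient $b$ is even; writing $b = 2b'$ gives $ac = n + b'^2$, and combining $a \le c$ with $|b'| \le a/2$ yields the bound $a \le \sqrt{4n/3}$, making the list of candidate pairs $(a, b')$ finite and searchable.

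The forward implication is then a routine finite check. For each $n \in \{1, 2, 3, 4, 7\}$, iterate $a$ over $1, \ldots, \lfloor \sqrt{4n/3} \rfloor$ and $b'$ over $0, \ldots, \lfloor a/2 \rfloor$, test whether $c = (n + b'^2)/a$ is an integer with $c \ge a$, and verify primitivity $\gcd(a, 2b', c) = 1$. In all five cases the only reduced form that survives is the principal $(1, 0, n)$, so $h(-4n) = 1$.

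For the converse, for every $n$ outside $\{1, 2, 3, 4, 7\}$ I would exhibit a second primitive reduced form by splitting on $n \bmod 4$. The form $(2, 0, n/2)$ works for $n \equiv 2 \pmod 4$ with $n \ge 6$, and $(2, 2, (n+1)/2)$ works for $n \equiv 1 \pmod 4$ with $n \ge 5$ (primitivity being automatic because $n/2$ and $(n+1)/2$ are then odd). For $n \equiv 0 \pmod 4$ with $n \ge 8$, one uses $(3, 2, 3)$ at $n = 8$, $(3, 0, 4)$ at $n = 12$, and the shapes $(4, 0, n/4)$ when $n/4$ is odd and $\ge 5$ or $(4, 4, n/4 + 1)$ when $n/4$ is even and $\ge 4$. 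For odd $n \equiv 3 \pmod 4$ with $n \ne 3, 7$ composite, either a direct construction or genus theory (the number of genera divides $h$) produces a non-principal class; for $n$ prime, the conductor-$2$ class number formula gives $h(-4n) = 3\, h(-n)$ when $n \equiv 3 \pmod 8$, so $h(-4n) \ge 3$, and $h(-4n) = h(-n)$ when $n \equiv 7 \pmod 8$.

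The main obstacle is the residual subcase of primes $n \equiv 7 \pmod 8$ with $n > 7$, where showing $h(-4n) > 1$ reduces to the inequality $h(-n) > 1$. This is a piece of the class-number-one problem for imaginary quadratic fields settled by Heegner, Stark and Baker; Landau's original 1903 argument bypasses that heavy machinery by combining genus-theoretic bounds with a finite effective computation up to an explicit cutoff.
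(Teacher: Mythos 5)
First, a point of reference: the paper does not prove this theorem at all -- it is quoted as Landau's 1903 resolution of a conjecture of Gauss, with Cox's book in the bibliography as the source -- so there is no in-paper argument to compare yours against. Your framework (one reduced primitive form per class, the bound $a\le\sqrt{4n/3}$, the finite check for $n\in\{1,2,3,4,7\}$, and explicit second reduced forms for the converse) is the standard elementary route, and your forms for $n\equiv 0,1,2\pmod 4$ all check out: each has discriminant $-4n$, is reduced and primitive in the stated range, and differs from $(1,0,n)$.

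The residue class $n\equiv 3\pmod 4$ is where the proposal has genuine gaps. For composite $n$, the parenthetical appeal to ``the number of genera divides $h$'' does not cover odd prime powers: for $n=p^{k}\equiv 3\pmod 4$ the genus count of discriminant $-4n$ is $2^{r-1}=1$ (e.g.\ $n=27$, where $h(-108)=3$ but there is a single genus), so you still owe an explicit non-principal form there, exactly as in the other cases. More seriously, the case you flag as ``the main obstacle'' -- primes $n\equiv 7\pmod 8$ with $n>7$ -- is not an obstacle and does not touch the class-number-one problem. You have already reduced it to showing $h(-n)>1$; since $-n\equiv 1\pmod 8$, the prime $2$ splits, and concretely $\left(2,\,1,\,\tfrac{n+1}{8}\right)$ is a primitive form of discriminant $1-8\cdot\tfrac{n+1}{8}=-n$ which is reduced as soon as $\tfrac{n+1}{8}\ge 2$, i.e.\ $n\ge 15$; it is distinct from the principal form $\left(1,1,\tfrac{n+1}{4}\right)$, so $h(-n)\ge 2$ and hence $h(-4n)=h(-n)\ge 2$. (The genuinely hard discriminants in the class-number-one problem are $-p$ with $p\equiv 3\pmod 8$, and those you have already dispatched via $h(-4p)=3h(-p)$.) As written, your proof leaves its final case resting on an inaccurate description of ``Landau's original argument,'' so it is incomplete; the one-line form above closes it.
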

One may note that, $x^{2}+ny^{2}$ is always a reduced form with discriminant $-4n.$\\
In this paper we consider the  case  $n=7$ and  represent $N(M_{p,\alpha})$ 
in the form $x^{2}+7y^{2}$ whenever  $M_{p,\alpha}$ is a Mersenne prime in $Q(\sqrt{2})$.\\
$x^{2}+7y^{2}$ is the only reduced form of discriminant $-28$, and it follows that
\begin{displaymath}
 p\,= \, x^{2}+7y^{2} \Longleftrightarrow p\equiv 1,9,11,15,23,25\pmod{28}
\end{displaymath}
for primes $p\neq 7$.
\subparagraph{}
 The special  property of the usual Mersenne primes over $Z$ referred 
to in the beginning 
\cite{LS} has the following generalisation over $Q(\sqrt{2})$: 
\begin{theorem}\label{main-th}
  If $N(M_{p,\alpha})$ is a rational prime, with $\alpha=2+\sqrt{2}$, then $N(M_{p,\alpha})$ is always a quadratic residue  $\pmod{7}$, and hence it can be 
written as $x^{2}+7y^{2}$. Also, $x$ is  divisible 
by $8$, and  $y \equiv \pm 3 \pmod{8}$.
\end{theorem}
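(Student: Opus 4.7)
The plan is to prove the three assertions in sequence: that $N := N(M_{p,\alpha})$ is a quadratic residue modulo $7$; that $N = x^2 + 7y^2$; and finally the $8$-adic conditions on $x$ and $y$.

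For the first assertion I would introduce the Lucas-type trace $T_n = \alpha^n + \bar\alpha^n$, where $\bar\alpha = 2-\sqrt 2$. From $\alpha+\bar\alpha = 4$ and $\alpha\bar\alpha = 2$, the recurrence $T_{n+1} = 4T_n - 2T_{n-1}$ holds with $T_0=2$, $T_1=4$. Expanding $(\alpha^p - 1)(\bar\alpha^p - 1) = 2^p - T_p + 1$ and dividing by $N(\alpha-1) = -1$ gives the compact identity $N = T_p - 2^p - 1$. Reducing mod $7$, the sequence $T_p$ is eventually periodic of period $6$ and $2^p$ of period $3$; a case-check on $p \bmod 6$ shows that for every odd prime $p \neq 3$, $N \bmod 7 \in \{1,4\}$, both quadratic residues mod $7$ (the case $p = 2$ gives $N = 7 = 0^2 + 7\cdot 1^2$ trivially).

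For the second assertion I would invoke the Corollary and Landau's theorem recalled just before the statement: since $h(-28) = 1$ and $x^2+7y^2$ is the only reduced form of discriminant $-28$, $N$ is so represented iff $\left(\frac{-7}{N}\right) = 1$. Using $N \equiv -1 \pmod 8$ (Property 3 of $N(M_{p,\alpha})$), quadratic reciprocity reduces $\left(\frac{-7}{N}\right)$ to $\left(\frac{N}{7}\right)$, which is $+1$ by the first step. Hence $N = x^2 + 7y^2$.

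For the third assertion I would start from the explicit form $N = h \cdot 2^{(p+3)/2} - 1$ with $h$ odd (stated just before Section~2), together with $w_p \equiv 1 \pmod 4$ for odd $p$ (an easy induction on the recurrences $v_{n+1} = v_n + 2w_n$, $w_{n+1} = v_n + w_n$ reduced mod $4$, which have joint period $4$). These give $N \equiv -1 \pmod{16}$ for $p \geq 5$. Working mod $16$, the squares of even integers lie in $\{0,4\}$ and of odd integers in $\{1,9\}$, so $x^2 + 7y^2 \equiv 15 \pmod{16}$ forces $y$ odd with $y^2 \equiv 9 \pmod{16}$ (hence $y \equiv \pm 3 \pmod 8$) and $x^2 \equiv 0 \pmod{16}$ (hence $4 \mid x$). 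The main obstacle is the remaining step of promoting $4 \mid x$ to $8 \mid x$: a direct mod-$2^k$ inspection leaves two a priori consistent cases, $x \equiv 0 \pmod 8$ with $y \equiv \pm 3 \pmod{16}$ versus $x \equiv 4 \pmod 8$ with $y \equiv \pm 5 \pmod{16}$. Ruling out the second requires a finer argument---either pushing to mod $32$ (or higher) using the sharper congruence $N \equiv -1 \pmod{2^{(p+3)/2}}$ combined with $w_p \pmod 8$ (determined by $p \pmod 8$ via the period-$8$ behaviour of $(v_n,w_n) \pmod 8$), or, more conceptually, a class-field-theoretic / Artin-reciprocity argument in the style of Lemmermeyer's original proof, carried out in $\mathbb{Q}(\sqrt 2, \sqrt{-7})$ or the ring class field of the order of discriminant $-28$.
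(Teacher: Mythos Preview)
Your proposal is correct and follows essentially the same route as the paper: the trace identity $N = T_p - 2^p - 1$ is just the paper's formula $N = 2^{(p+3)/2}w_p - 2^p - 1$ rewritten (since $T_p = 2a_p = 2^{(p+3)/2}w_p$), the mod-$7$ and mod-$16$ reductions match the paper's Lemma, and like you the paper isolates the passage from $4\mid x$ to $8\mid x$ as the nontrivial step and handles it via Artin reciprocity (only outlined here, with details deferred to \cite{LS} and \cite{thes}). The purely $2$-adic alternative you float (pushing to mod $32$ or higher) is not attempted in the paper and almost certainly does not close the gap on its own; the paper commits to the class-field-theoretic argument.
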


  The detailed proof is given in \cite{thes}, using Artin's reciprocity law.\\
Here we prove the theorem in two stages:
first we show that $N(M_{p,\alpha})$ is always a quadratic residue  $\pmod{7}$.
Next, we  give an outline of the proof that $x$ is  divisible 
by $8$, and  $y \equiv \pm 3 \pmod{8}$.  \\
The first few 
Mersenne primes in $Q(\sqrt{2})$ with $\alpha=2+\sqrt{2},$ as well as the representations of their norms
as $x^{2}+7y^{2}$ is given below.
\begin{center}
\textit{Table (7)}\\
\begin{tabular}{|l|l|l|l|}                                   \hline
  $p$           &$M_{p,\alpha}$   &$x^{2}+7y^{2}$       \\ \hline
$5$               &$431$  		&$16^{2}+7\cdot5^{2}$                 \\ \hline
$7$            &$5279$                &$64^{2}+7\cdot13^{2}$                    \\ \hline
$11$             &$732799$ 	       &$856^{2}+7\cdot3^{2}$                     \\ \hline         
\end{tabular}
\end{center}
\subparagraph{} For $p=73$, \\ $N(M_{p,\alpha})=851569055172258793218602741480913108991$ = \\
$(28615996544447548272)^{2}+7\cdot(2161143775888286749)^{2}$\\ 
 For $p=\,89$,  \\ $N(M_{p,\alpha}) = 290315886781191681464330388772329064268797313023$ =\\
$(363706809248848497658560)^{2}+7\cdot(150253711001099458172317)^{2}$\\
For $p=\,233$,  \\ $N(M_{p,\alpha})$ =
$1806047542728202303336800123116644178473780689153780654706531416$\\$7911959518498581747712829157156517940837234519177963497324543.$\\  
The corresponding representation is \\
$(86527345603258677818378326573842407929031070590321223524182584)^{2} +$\\
$7\cdot(38865140256563104639356290982349294477380709218952585423373629)^{2}$
\paragraph{} We now show that, if $N(M_{p,\alpha})$ is a prime then, $N(M_{p,\alpha})$ can be written as $x^{2}+7y^{2}$.\\
Since $N(M_{p,\alpha})= 2^{\frac{p+3}{2}}w_{p}-2^{p}-1,$ representing a prime in the form $x^{2}+7y^{2}$
depends on $w_{p}$. Now, we find the values of $v_{p}$ and $w_{p} \pmod{7}$.\\
As we know, for any odd  $n$, $N(u^{n})\,=\,v_{n}^{2}-2 w_{n}^{2}\,=-1$. \\
If $u^{n}=v_{n} +w_{n}\sqrt{2}$, then $v_{n}$ and $w_{n}$ satisfy the following recursions:\\
  $v_{n+1}=v_{n}+2w_{n}$ and $w_{n+1}=v_{n}+w_{n}$, with  initial conditions: $v_{1}=1,\,w_{1}=1$.\\
The above recursions can be used to show that $v_{n}$ and $w_{n}$ satisfy the following:
\begin{equation*}
 v_{n+2}=3v_{n}+4w_n \,;\, w_{n+2}=2v_n+3w_n
\end{equation*}
\begin{equation*}
 v_{n+3}=7v_n+ 10w_n \,;\, w_{n+3}=5v_n+10w_n
\end{equation*}
\begin{equation*}
v_{n+4}=17v_n+24w_n \,;\,w_{n+4}=12v_n+17w_n
\end{equation*}
\begin{equation*}
v_{n+5}=41v_n+58w_n \,;\, w_{n+5}=29v_n+41w_n
\end{equation*}
\begin{equation*}
v_{n+6}=99v_n+140w_n \,;\, w_{n+6}=70v_n+99w_n
\end{equation*}
From the above one may also easily obtain the following congruences:
\begin{equation*}
 \{v_{6k+1}\}\equiv1 \pmod{7} \; ;\;\{w_{6k+1}\}\equiv1 \pmod{7}
\end{equation*}
\begin{equation*}
 \{v_{6k+5}\}\equiv6 \pmod{7}\;;\; \{w_{6k+5}\}\equiv1 \pmod{7}
\end{equation*}
Since only odd prime powers greater than $3$ are considered, we have listed only
 the congruences for indices congruent to $\pm 1 \pmod{6}$.\\
Hence, \begin{equation} \label{eq2}
 N(M_{p,\alpha})=2^{\frac{p+3}{2}}w_{p}-2^{p}-1\equiv 2^{\frac{p+3}{2}}- 2^{p}-1 \pmod{7}
\end{equation}
Let us solve equation (\ref{eq2}) for $p>3$.\\
If $p=3k+1$ then, $2^{p}\equiv 2 \pmod{7}$ and $2^{\frac{p+3}{2}}\equiv 4 \pmod{7}$,  
so\[ N(M_{p,\alpha})\equiv 1 \pmod{7}.\]
If $p=3k+2$ then, $2^{p}\equiv 4 \pmod{7}$ and $2^{\frac{p+3}{2}}\equiv 2 \pmod{7}$,
 so \[N(M_{p,\alpha})\equiv 4\pmod{7}.\]
Thus in both cases $N(M_{p,\alpha})$ can always be represented as $x^{2}+7y^{2}$.
\paragraph{}To prove   Theorem \ref{main-th} we need the following lemma.
\begin{lemma}
 If $N(M_{p,\alpha})$ is a 
rational prime and $N(M_{p,\alpha})=x^{2}+7y^{2}$, then $x\equiv 0 \pmod{4}$, and $y\equiv \pm 3\pmod{8}.$ 
\end{lemma}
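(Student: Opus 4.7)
The plan is to sharpen the congruence $N(M_{p,\alpha}) \equiv -1 \pmod 8$ already recorded in the list of properties to a congruence modulo $16$, and then extract the claim by a short case analysis on quadratic residues modulo $16$.

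First, I would observe that for $p \geq 5$ both $2^{(p+3)/2}$ and $2^p$ are divisible by $16$, so the formula $N(M_{p,\alpha}) = 2^{(p+3)/2} w_p - 2^p - 1$ immediately gives $N(M_{p,\alpha}) \equiv -1 \equiv 15 \pmod{16}$. The small exponents $p=2,3$ lie outside the interesting range (for $p=3$ the norm $31$ is not represented by $x^2+7y^2$ at all, since $31 \not\equiv 1,9,11,15,23,25 \pmod{28}$), so the assumption $p\geq 5$ is tacit in the lemma.

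Next, I would enumerate the possibilities for $x^2+7y^2 \pmod{16}$. The squares modulo $16$ form the set $\{0,1,4,9\}$, and multiplying each by $7$ and reducing gives $7y^2 \in \{0,7,12,15\} \pmod{16}$. Requiring $x^2+7y^2 \equiv 15 \pmod{16}$ then eliminates every combination except $7y^2 \equiv 15$ together with $x^2 \equiv 0$; equivalently, $y^2 \equiv 9 \pmod{16}$ and $4 \mid x$. Finally, $y^2 \equiv 9 \pmod{16}$ forces $y \equiv \pm 3 \pmod 8$, since the odd residues modulo $16$ whose squares are $\equiv 9$ are precisely $\{3,5,11,13\}$, which reduce to $\pm 3$ modulo $8$, while the odd residues $\{1,7,9,15\}$ square to $1$ modulo $16$.

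The main obstacle here is not conceptual but purely bookkeeping: one has to enumerate squares and products in $\mathbb{Z}/16\mathbb{Z}$ carefully enough to see that exactly one case survives. Once the mod-$16$ congruence on $N(M_{p,\alpha})$ is in place, the lemma falls out by direct inspection. The stronger conclusion $x \equiv 0 \pmod 8$ promised in Theorem~\ref{main-th} cannot be reached by elementary reduction modulo a power of $2$ alone and is precisely where Artin's reciprocity law enters in the complete proof of \cite{thes}.
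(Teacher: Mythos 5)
Your proof is correct and takes essentially the same route as the paper's: both reduce the explicit formula $N(M_{p,\alpha})=2^{\frac{p+3}{2}}w_{p}-2^{p}-1$ modulo a power of $2$ and then sieve the possible residues of $x^{2}+7y^{2}$. The only difference is organizational --- you run a single enumeration modulo $16$, whereas the paper first works modulo $8$ to force $x$ even, $y$ odd and $4\mid x$, and only then passes to modulo $16$ to conclude $y\equiv\pm 3\pmod{8}$.
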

\begin{proof}
From the previous discussion, we know \begin{equation} \label{lem-eq}
                                       N(M_{p,\alpha})=x^{2}+7y^{2}
                                      \end{equation}

But, $N(M_{p,\alpha})=2^{\frac{p+3}{2}}w_{p}-2^{p}-1.$
Clearly we may take $p>6$. So either $p=6k+1$ or $p=6k+5$.\\
If $p=6k+1$, then 
\begin{equation} \label{lem-eq1}
N(M_{p,\alpha})=2^{\frac{6k+4}{2}}w_p-2^{6k+1}-1\, \equiv -1 \equiv 7\pmod{8} \end{equation}
If $p=6k+5$,   then also,
\begin{equation} \label{lem-eq2}N(M_{p,\alpha})=2^{\frac{6k+5}{2}}w_p-2^{6k+5}-1\equiv 7\pmod{8}\end{equation}
But RHS of equation (\ref{lem-eq}) is
$x^{2}+7y^{2}$. We show that $x$ must be even and $y$ odd.\\ For, if $x$ is odd and $y$ is even, then $x^{2}\equiv 1 \pmod{8}$
 and
either $y^{2}\equiv 0 \pmod{8}$ or $y^{2}\equiv 4 \pmod{8}$. If  $y^{2}\equiv 0 \pmod{8}$, 
then $x^{2}+7y^{2}\equiv 1\pmod{8}$
contradicting equations (\ref{lem-eq1}) and (\ref{lem-eq2}); and if \(y^{2}\equiv 4\pmod{8}\), 
then $x^{2}+7y^{2}\equiv1+7.4\equiv 5\pmod{8}$,
 again contradicting equations (\ref{lem-eq1}) and (\ref{lem-eq2}).
Thus $x$ is even and $y$ is odd. In this case, by equation (\ref{lem-eq})  \[7\equiv x^{2}+7y^{2}\equiv x^{2}+7\pmod{8},\]
since \(y^{2}\equiv 1\pmod{8}\) and so, $x^{2}\equiv 0 \pmod{8}$ implying $x\equiv 0\pmod{4}$.\\
We now prove that $y\equiv \pm3\pmod{8}$:\\
 Let $p\equiv 1\pmod{6}$. From  equation (\ref{lem-eq1}) \[N(M_{p,\alpha})=x^{2}+7y^{2} =2^{\frac{6k+4}{2}}w_p-2^{6k+1}-1\]  
Reducing modulo $16$, we get $N(M_{p,\alpha})\equiv -1\pmod{16}$. But $N(M_{p,\alpha})=x^{2}+7y^{2}$, and 
$x\equiv 0\pmod{4}$. Hence, $7y^2\equiv -1\pmod{16}$, yielding $y^{2}\equiv 9\pmod{16}$. This proves that,
 $y\equiv \pm3\pmod{8}$. The same result follows from equation(\ref{lem-eq2}) when $p\equiv 5\pmod{6}$.
\end{proof}
\begin{underline}
 {Outline of the Proof of Theorem} \ref{main-th}:
\end{underline}
We now show that $x\equiv 0\pmod{8}$.
Virtually, the proof given in \cite{LS} carries over word-for-word, and so, we merely give an outline. 
All details and notation are 
as in \cite{LS}. By definition,  $N(M_{p,\alpha})=\frac{(2+\sqrt{2})^p-1}{1+\sqrt{2}}. \frac{(2-\sqrt{2})^p-1}{1-\sqrt{2}}.$
 Denote the two factors on the right by $v_p$ and $\bar v_p$. 
It is easy to see that $v_p$ and $\bar v_p$ are both totally 
positive. We compute the Artin symbols of $v_{p}Z_E$ and $\bar v_{p}Z_E$, and show that 
they are both trivial. We need to consider only two cases: $p\equiv 1\pmod 6$ and
$p\equiv 5\pmod 6$.\\
Since $\sqrt{2}\equiv 3,4\pmod{7}$, by taking $\sqrt{2}=4$ in $v_p$ and $\sqrt{2}=3$ in $\bar v_p$,  
a straightforward computation shows that, $v_p\equiv 1\pmod{7}$ and $\bar v_p\equiv 1\pmod{7}$. This completes the proof. 
\paragraph{Acknowledgement\\}
The authors gratefully acknowledge the kind help and encouragement given by Prof C S Dalawat
during their visit to Harish-Chandra Research Institute, Allahabad, INDIA, in December 2011.
 His lucid explanation of Artin Reciprocity and short introduction to Class field theory
was of great help.

\end{document}